\newtheorem{theorem}{Theorem}[section]
\newtheorem{lemma}[theorem]{Lemma}
\newtheorem{corollary}[theorem]{Corollary}
\theoremstyle{definition}
\newtheorem{remark}[theorem]{Remark}
\providecommand{\Real}{\mathop{\rm Re}\nolimits}%
\providecommand{\Imag}{\mathop{\rm Im}\nolimits}%
\providecommand{\Res}{\mathop{\rm Res}}
\begin{document}



\title{The Lerch zeta function as a fractional derivative}

\author[1]{Arran Fernandez \thanks{Email: \texttt{af454@cam.ac.uk}}}
\affil[1]{{\small Department of Applied Mathematics \& Theoretical Physics, University of Cambridge, Cambridge, CB3 0WA, United Kingdom}}

\date{}

\maketitle

\begin{abstract}
We derive and prove a new formulation of the Lerch zeta function as a fractional derivative of an elementary function. We demonstrate how this formulation interacts very naturally with basic known properties of Lerch zeta, and use the functional equation to obtain a second formulation in terms of fractional derivatives.
\end{abstract}

\section{Introduction}
\label{intro}

Zeta functions are among the most important objects in the field of analytic number theory. The most famous of these is the \textit{Riemann zeta function}, defined by
\begin{equation}
\label{Riemann:defn}
\zeta(s)=\sum_{n=1}^{\infty}n^{-s}\quad\text{ for }\Real(s)>1,
\end{equation}
and by analytic continuation for all $s\in\mathbb{C}$. This function has been the subject of intense study for nearly two hundred years, mostly due to its connection with the distribution of prime numbers \cite{Edwards,Ivic,Titchmarsh}. It can be generalised in a number of directions: for example, the \textit{Dirichlet $L$-functions} are number-theoretical generalisations depending on both the complex variable $s$ and also a Dirichlet character modulo some base $d$, while the \textit{Hurwitz zeta function} and \textit{Lerch zeta function} are analytic generalisations depending on two or three independent complex variables. Specifically, the Hurwitz zeta function is defined by
\begin{equation}
\label{Hurwitz:defn}
\zeta(x,s)=\sum_{n=0}^{\infty}(n+x)^{-s}\quad\text{ for }\Real(s)>1,\Real(x)>0,
\end{equation}
and by analytic continuation for all $s\in\mathbb{C}$, while the Lerch zeta function is defined by
\begin{equation}
\label{Lerch:defn}
L(t,x,s)=\sum_{n=0}^{\infty}(n+x)^{-s}e^{2\pi itn}\quad\text{ for }\Real(s)>1,\Real(x)>0,\Imag(t)\geq0,
\end{equation}
and by analytic continuation for $(t,x,s)$ in larger domains \cite{LagariasII}, extending to a universal cover of the manifold $\mathbb{C} \backslash\mathbb{Z}\times\mathbb{C}\backslash\mathbb{Z}_0^-\times\mathbb{C}$. (We note that $t$ here is not the imaginary part of $s$, which it has sometimes \cite{Edwards,Titchmarsh} been used to denote, but an entirely independent variable.) It is clear that the Riemann, Hurwitz, and Lerch zeta functions are related by the following identities: \[\zeta(s)=\zeta(1,s);\quad\zeta(x,s)=L(0,x,s).\]

Many of the techniques used for analysing the Riemann zeta function and Dirichlet $L$-functions, such as the Euler product formula, have no general analogues for the Hurwitz or Lerch zeta functions. This is because the latter functions have a less direct connection to number theory, and are more readily studied using analytic methods. Indeed, many important facts about the Riemann zeta function do have analogues in the Hurwitz and Lerch cases \cite{Rane,GarunkstisI,GarunkstisII}, which are even proved in some cases by analogous methods. And analysing the Hurwitz and Lerch zeta functions can still be significant for number theory, purely because they include the Riemann zeta function as a special case.

In the current work, we shall be using the theory of \textit{fractional calculus}: derivatives and integrals to non-integer orders. It is possible to define the $n$th derivative of a function not just for $n\in\mathbb{N}$ but for any $n\in\mathbb{R}$ or even $n\in\mathbb{C}$. This field of study has a long history, stretching back to Hardy, Littlewood, Riemann, and even Leibniz, but only in recent decades has it begun to expand more rapidly. Much of this expansion is due to applications of fractional models being discovered throughout many areas of science, including chaos theory \cite{Hilfer,Petras}, bioengineering \cite{Magin}, stochastic processes \cite{Meerschaert}, and control theory \cite{Baleanu}.

Fractional-order derivatives and integrals can be defined in a number of ways, from the classical Riemann--Liouville and Caputo formulae \cite{Miller,Samko} to more recent variants such as the Caputo--Fabrizio, Atangana--Baleanu, and other models \cite{Caputo,Atangana,Jarad,Katugampola}. Here, we shall be using the basic Riemann--Liouville model, in which the fractional integral is defined by
\begin{equation}
\label{RLint:defn}
\prescript{}{c}D_{t}^{-\alpha}f(t)=\tfrac{1}{\Gamma(\alpha)}\int_c^t(t-u)^{\alpha-1}f(u)\,\mathrm{d}u\quad\text{ for }\Real(\alpha)>0,
\end{equation}
and the fractional derivative is defined by
\begin{equation}
\label{RLderiv:defn}
\prescript{}{c}D_{t}^{\alpha}f(t)=\frac{\mathrm{d}^n}{\mathrm{d}t^n}\Big(\prescript{}{c}D_{t}^{\alpha-n}f(t)\Big), \;n:=\lfloor\Real(\alpha)\rfloor+1,\quad\text{ for }\Real(\alpha)\geq0.
\end{equation}
In both \eqref{RLint:defn} and \eqref{RLderiv:defn}, the quantity $c$ is a complex constant, which can be thought of as a constant of integration. In most applications of Riemann--Liouville fractional calculus, $c$ is taken to be either $0$ or $-\infty$. The term \textit{differintegral} is used in fractional calculus to cover both derivatives and integrals, which in certain models can both be expressed by a unified formula.

When $t$ is a complex variable, the issue of branches and contours arises, since the term $(t-u)^{\alpha-1}$ appearing in \eqref{RLint:defn} is in general a multi-valued function. We usually take the contour of integration to be the straight line-segment from $c$ to $t$ in the complex $u$-plane, so that the argument of $t-u$ is fixed as $u$ varies. In the present work, we shall be using $c=-\infty$, so the contour of integration is a horizontal ray extending to the left from $t$, and we assume $\arg(t-u)=0$ so that the integrand of \eqref{RLint:defn} is a real multiple of $f(u)$.

There are various ways of motivating the definitions \eqref{RLint:defn} and \eqref{RLderiv:defn}. For example, the integral formula \eqref{RLint:defn} is a natural generalisation of Cauchy's formula for repeated integrals, or of Cauchy's integral formula in complex analysis, while the derivative formula \eqref{RLderiv:defn} arises naturally from consideration of semigroup properties and is also, for holomorphic functions $f$, the analytic continuation of \eqref{RLint:defn}.

We provide further motivation for the Riemann--Liouville formula by demonstrating that it works as expected for a few elementary functions $f$, in the following two lemmas.

\begin{lemma}
\label{RL:power}
The Riemann--Liouville differintegral of a power function, with constant of differintegration $c=0$, is given by
\begin{equation}
\label{RL:power:eqn}
\prescript{}{0}D_{t}^{\alpha}\left(t^{\beta}\right)=\frac{\Gamma(\beta+1)}{\Gamma(\beta-\alpha+1)}t^{\beta-\alpha},
\end{equation}
for $\alpha,\beta\in\mathbb{C}$ with $\Real(\beta)>-1$.
\end{lemma}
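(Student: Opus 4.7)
The natural approach is to split the argument according to the sign of $\Real(\alpha)$, first handling the fractional integral case ($\Real(\alpha)<0$) directly from \eqref{RLint:defn} and then bootstrapping the derivative case ($\Real(\alpha)\geq 0$) via \eqref{RLderiv:defn}. Throughout, the identification $\int_0^1 (1-v)^{p-1}v^{q-1}\,\mathrm{d}v = B(p,q) = \Gamma(p)\Gamma(q)/\Gamma(p+q)$ will be the engine behind the formula.

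For the integral case, I would first assume $\Real(\alpha)<0$ and set $\mu := -\alpha$, so $\Real(\mu)>0$. Applying \eqref{RLint:defn} to $f(u)=u^\beta$ and making the substitution $u = tv$ (which is legitimate because we are integrating along the segment $[0,t]$), the integral becomes
\begin{equation*}
\prescript{}{0}D_{t}^{\alpha}(t^\beta) = \tfrac{1}{\Gamma(\mu)}\int_0^t (t-u)^{\mu-1} u^\beta \,\mathrm{d}u = \tfrac{t^{\mu+\beta}}{\Gamma(\mu)} \int_0^1 (1-v)^{\mu-1} v^{\beta}\,\mathrm{d}v.
\end{equation*}
The inner integral converges precisely when $\Real(\beta)>-1$, which is the hypothesis of the lemma. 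Recognising it as the Beta function $B(\beta+1,\mu)$ and applying the Beta–Gamma identity yields $\Gamma(\beta+1)\,t^{\beta-\alpha}/\Gamma(\beta-\alpha+1)$, as required.

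For $\Real(\alpha)\geq 0$, I would take $n:=\lfloor\Real(\alpha)\rfloor+1$ so that $\Real(\alpha-n)<0$, and apply \eqref{RLderiv:defn} together with the case already proved, giving
\begin{equation*}
\prescript{}{0}D_{t}^{\alpha-n}(t^\beta) = \tfrac{\Gamma(\beta+1)}{\Gamma(\beta-\alpha+n+1)}\,t^{\beta-\alpha+n}.
\end{equation*}
Now I would differentiate $n$ times using the ordinary-derivative rule $\frac{\mathrm{d}^n}{\mathrm{d}t^n}t^{\gamma} = \gamma(\gamma-1)\cdots(\gamma-n+1)\,t^{\gamma-n} = \frac{\Gamma(\gamma+1)}{\Gamma(\gamma-n+1)}t^{\gamma-n}$ with $\gamma=\beta-\alpha+n$. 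The factors of $\Gamma(\beta-\alpha+n+1)$ cancel telescopically between the fractional integral constant and the differentiation step, leaving exactly $\Gamma(\beta+1)\,t^{\beta-\alpha}/\Gamma(\beta-\alpha+1)$.

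I do not anticipate any serious obstacle: the convergence condition $\Real(\beta)>-1$ is exactly what the Beta integral requires, the line-segment contour ensures $\arg(t-u)=0$ so no branch ambiguity enters the substitution, and the cancellation of gamma factors in the derivative case is a routine bookkeeping exercise. The only point requiring mild care is the bookkeeping of the shift $\Gamma(\gamma+1)/\Gamma(\gamma-n+1)$ with $\gamma=\beta-\alpha+n$, which must be handled symbolically (rather than as a product of integers) since $\beta-\alpha$ need not be real.
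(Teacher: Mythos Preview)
Your proposal is correct and is exactly the approach the paper indicates: the paper's proof is a one-line remark that the result ``follows directly from the definition of the beta function'' with details deferred to \cite{Miller}, and you have simply filled in those details (the Beta-integral computation for $\Real(\alpha)<0$ and the bootstrap via \eqref{RLderiv:defn} for $\Real(\alpha)\geq0$).
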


\begin{proof}
This follows directly from the definition of the beta function; the details may be found in \cite{Miller}.
\end{proof}

\begin{lemma}
\label{RL:exp}
The Riemann--Liouville differintegral of an exponential function, with constant of differintegration $c=-\infty$, is given by
\begin{equation}
\label{RL:exp:eqn}
\prescript{}{-\infty}D_{t}^{\alpha}\left(e^{kt}\right)=k^{\alpha}e^{kt},
\end{equation}
for $\alpha,k\in\mathbb{C}$ with $k\not\in\mathbb{R}^-_0$, where complex power functions are defined by the principal branch with arguments in the interval $(-\pi,\pi)$.
\end{lemma}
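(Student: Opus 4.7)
The plan is to prove \eqref{RL:exp:eqn} in three stages: first establish it for the fractional integral case ($\Real(\alpha) < 0$) with $\Real(k) > 0$ by direct computation from \eqref{RLint:defn}; next extend the identity in $k$ to the full domain $\mathbb{C} \setminus \mathbb{R}_0^-$ by analytic continuation (equivalently, by contour deformation); and finally handle the case $\Real(\alpha) \geq 0$ using the derivative formula \eqref{RLderiv:defn}.

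For the first stage, I substitute $f(u) = e^{ku}$ into \eqref{RLint:defn} (with the role of $\alpha$ played by $-\alpha > 0$) and change variables by $v = t - u$ to factor out $e^{kt}$, obtaining
\begin{equation*}
\prescript{}{-\infty}D_t^{\alpha}(e^{kt}) = \frac{e^{kt}}{\Gamma(-\alpha)} \int_0^{\infty} v^{-\alpha-1} e^{-kv} \, \mathrm{d}v.
\end{equation*}
A further substitution $w = kv$ turns this into $k^{\alpha}$ times an integral along the ray from $0$ in the direction of $k$. When $\Real(k) > 0$ this ray lies in the right half-plane, where $w^{-\alpha-1} e^{-w}$ decays rapidly at infinity, so the contour can be deformed back to the positive real axis to recover $\Gamma(-\alpha)$. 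The two gamma factors cancel, yielding $k^{\alpha} e^{kt}$.

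For the second stage, I fix $\alpha$ with $\Real(\alpha) < 0$. The right-hand side $k^{\alpha} e^{kt}$ is holomorphic in $k$ on $\mathbb{C} \setminus \mathbb{R}_0^-$ by the principal-branch convention. The left-hand side is initially defined only for $\Real(k) > 0$, but one may deform the horizontal contour to any non-horizontal ray from $t$ along which $e^{ku}$ decays; this is feasible precisely when $k \not\in \mathbb{R}_0^-$. By the identity theorem, the two expressions must agree throughout this region. For the third stage, given $\Real(\alpha) \geq 0$ I set $n = \lfloor \Real(\alpha) \rfloor + 1$ so that $\Real(\alpha - n) < 0$; the first two stages give $\prescript{}{-\infty}D_t^{\alpha-n}(e^{kt}) = k^{\alpha-n} e^{kt}$, and applying $\frac{\mathrm{d}^n}{\mathrm{d}t^n}$ as in \eqref{RLderiv:defn} multiplies by $k^n$, producing the desired $k^{\alpha} e^{kt}$.

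The main technical obstacle is in the second stage: when $\Real(k) \leq 0$ the horizontal contour does not yield a convergent integral, so one must either rotate the contour or invoke analytic continuation, and it must also be checked that the branch of $k^{\alpha}$ produced by the substitution $w = kv$ is consistently the principal one named in the statement. Everything else reduces to the beta- and gamma-integral manipulations already used to justify Lemma~\ref{RL:power}, together with the elementary observation that $\frac{\mathrm{d}}{\mathrm{d}t}(e^{kt}) = k e^{kt}$.
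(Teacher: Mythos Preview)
Your proposal is correct and follows essentially the same route as the paper: reduce to the fractional-integral case $\Real(\alpha)<0$, transform the defining integral into a gamma integral via the substitution $v=k(t-u)$ (which you split into two steps), and then extend to $\Real(\alpha)\geq0$ using \eqref{RLderiv:defn}. Your explicit second stage---extending in $k$ from the right half-plane to all of $\mathbb{C}\setminus\mathbb{R}_0^-$ by contour rotation or analytic continuation---is more careful than the paper, which folds this into a single complex substitution accompanied only by the remark that ``care must be taken over the complex substitution in the integral.''
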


\begin{proof}[Proof (based on \cite{Samko})]
This follows from the definition of the gamma function, but care must be taken over the complex substitution in the integral. Note first that it will suffice to prove the result for $\Real(\alpha)<0$, since it will then follow for $\Real(\alpha)\geq0$ using the definition \eqref{RLderiv:defn}. Thus we assume $\Real(\alpha)<0$ and $k\not\in\mathbb{R}^-_0$, and use the definition \eqref{RLint:defn}: \[\prescript{}{-\infty}D_{t}^{\alpha}\left(e^{kt}\right)=\tfrac{1}{\Gamma(-\alpha)}\int_{-\infty}^t(t-u)^{-\alpha-1}e^{ku}\,\mathrm{d}u.\] Substituting $v=kt-ku$ yields
\begin{align*}
\prescript{}{-\infty}D_{t}^{\alpha}\left(e^{kt}\right)&=\tfrac{1}{\Gamma(-\alpha)}\int_{\infty}^0\left(\tfrac{v}{k}\right)^{-\alpha-1}e^{kt-v}\big(\tfrac{1}{-k}\big)\,\mathrm{d}v \\
&=\tfrac{1}{\Gamma(-\alpha)}e^{kt}\left(\tfrac{1}{k}\right)^{-\alpha}\int^{\infty}_0v^{-\alpha-1}e^{-v}\,\mathrm{d}v \\
&=e^{kt}\left(\tfrac{1}{k}\right)^{-\alpha}=k^{\alpha}e^{kt},
\end{align*}
where for the last step we used the fact that $k$ is not on the critical branch cut and therefore $k$ and $\frac{1}{k}$ both have arguments in $(-\pi,\pi)$.
\end{proof}

Another way of motivating the Riemann--Liouville definition is to note that it behaves exactly as expected with respect to Fourier and Laplace transforms. It is well known that standard differentiation and integration of a function correspond to multiplication of its Fourier transform by power functions. It turns out \cite{Miller,Samko} that the same is true for Riemann--Liouville fractional differintegrals:
\begin{align*}
\mathcal{F}\left[\prescript{}{-\infty}D_t^{\alpha}f(t)\right]&=(i\omega)^{\alpha}\mathcal{F}[f(t)]; \\
\mathcal{L}\left[\prescript{}{0}D_t^{\alpha}f(t)\right]&=\omega^{\alpha}\mathcal{L}[f(t)].
\end{align*}
As we are only mentioning this identities for the sake of motivation, and they are not relevant to the main arguments of this paper, we omit the proofs, and refer the reader to \cite[\S7]{Samko} for rigorous statements of the results with all required assumptions.

The following result concerning fractional integration of series will be used later on in the proof of the main result.

\begin{lemma}
\label{RL:series}
If the series $f(t)=\sum_{n=1}^{\infty}f_n(t)$ is uniformly convergent on a complex disc $|t-c|\leq R$ with $c-R\not\in\mathbb{R}^+_0$, and the constants $\delta$, $\alpha$ satisfy $\delta>0$, $\Real(\alpha)<0$, and \[\left[\sum_{n=N+1}^{\infty}f_n(t)\right]t^{\delta-\alpha}\rightarrow0\text{ as }N\rightarrow\infty\] uniformly on the ray from $c-R$ to negative infinity, then we have
\begin{equation*}
\prescript{}{-\infty}D_{t}^{\alpha}f(t)=\sum_{n=1}^{\infty}\prescript{}{-\infty}D_{t}^{\alpha}f_n(t)
\end{equation*}
for $|t-c|\leq R$, and the series of fractional integrals is locally uniformly convergent.
\end{lemma}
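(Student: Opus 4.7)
The plan is to reduce the assertion to showing that $\prescript{}{-\infty}D_t^{\alpha}R_N(t)\to 0$ uniformly on $|t-c|\leq R$ as $N\to\infty$, where $R_N(t):=\sum_{n=N+1}^{\infty}f_n(t)$ is the tail of the series. By linearity of $\prescript{}{-\infty}D_t^{\alpha}$ on finite sums, the difference between $\prescript{}{-\infty}D_t^{\alpha}f(t)$ and the partial sum $\sum_{n=1}^{N}\prescript{}{-\infty}D_t^{\alpha}f_n(t)$ is exactly $\prescript{}{-\infty}D_t^{\alpha}R_N(t)$ (and this reduction also implicitly confirms that each individual $\prescript{}{-\infty}D_t^{\alpha}f_n$ is well defined, since $f_n=R_{n-1}-R_n$ inherits the required decay at $-\infty$ from the tail hypothesis). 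Hence both the locally uniform convergence of the series of differintegrals and the claimed identity follow once the tail is controlled.

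To bound $\prescript{}{-\infty}D_t^{\alpha}R_N(t)=\tfrac{1}{\Gamma(-\alpha)}\int_{-\infty}^{t}(t-u)^{-\alpha-1}R_N(u)\,\mathrm{d}u$, I would split the horizontal contour into a near portion $I_2$ (from a fixed point slightly beyond the far edge of the disk's horizontal projection up to $t$) and a far portion $I_1$ (from $-\infty$ to that point). On $I_2$ the weight $|t-u|^{-\Real(\alpha)-1}$ is locally integrable at $u=t$ because $\Real(\alpha)<0$, and $R_N(u)$ is uniformly small on the (bounded) set $I_2$: on the portion lying inside the disk this is exactly the uniform-convergence hypothesis, while on any bounded extension outside the disk the factor $|u|^{\delta-\alpha}$ is bounded and so the tail hypothesis again delivers uniform smallness.

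On the far portion $I_1$, the tail hypothesis yields $|R_N(u)|\leq \eta_N |u|^{\Real(\alpha)-\delta}$ with $\eta_N\to 0$. For every $t\in\overline{B(c,R)}$ and every $u\in I_1$ sufficiently far to the left there is a uniform constant $C>0$ with $|t-u|\geq C|u|$; the integrand is therefore bounded above by a constant times $\eta_N |u|^{-1-\delta}$, which is integrable at $-\infty$ precisely because $\delta>0$. Combining the contributions of $I_1$ and $I_2$ gives a bound of the form $\eta_N K$ with $K$ depending only on $c$, $R$, $\alpha$, $\delta$, which establishes the required uniform decay.

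The main obstacle is keeping all estimates uniform in $t$ across the closed disk while respecting the branch structure of $(t-u)^{-\alpha-1}$. One needs the constant $C$ above to be independent of the particular $t\in\overline{B(c,R)}$, and one must verify that the horizontal contour emanating from each such $t$ avoids any branch singularity of the integrand; this last requirement is what the hypothesis $c-R\notin\mathbb{R}^+_0$ is designed to ensure, since it rules out the rays from points of the disk crossing the problematic part of the real axis. Once uniformity and branch-compatibility are secured, compactness of the closed disk supplies the remaining constants and the argument closes.
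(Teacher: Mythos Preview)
The paper does not give its own proof here; it simply cites Keiper's thesis and remarks that the argument there (stated for real $c$) carries over verbatim once $c-R\notin\mathbb{R}^+_0$. Your tail-remainder approach---reducing to $\prescript{}{-\infty}D_t^{\alpha}R_N\to 0$ uniformly and splitting the contour into a near piece (controlled by uniform convergence on the disc and local integrability of $|t-u|^{-\Real(\alpha)-1}$ at $u=t$) and a far piece (controlled by the decay hypothesis together with $|t-u|\asymp|u|$)---is the standard argument and almost certainly what Keiper does.

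Two small technical points are worth tightening. First, the single inequality $|t-u|\geq C|u|$ only yields the upper bound $|t-u|^{-\Real(\alpha)-1}\leq\text{const}\cdot|u|^{-\Real(\alpha)-1}$ when $-\Real(\alpha)-1<0$, i.e.\ when $-1<\Real(\alpha)<0$; for $\Real(\alpha)<-1$ you need the companion bound $|t-u|\leq C'|u|$ (also trivially true since $t$ lies in a bounded set). It is cleanest to record $|t-u|\asymp|u|$ on $I_1$ and use whichever direction the sign of $-\Real(\alpha)-1$ demands. Second, the decay hypothesis is stated only on the single horizontal ray from $c-R$, whereas the integration contour from a generic $t\in\overline{B(c,R)}$ is the horizontal ray at height $\Imag(t)$, which need not coincide with it. Closing this gap requires either holomorphy of the $f_n$ (so that the contour can be deformed through the disc onto the reference ray) or a hypothesis on the full horizontal strip; in Keiper's setting and in the paper's application the functions are entire, so this is harmless, but it is an implicit assumption you are leaning on rather than something your argument supplies.
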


\begin{proof}
This result is established by the proof of \cite[Theorem IX]{Keiper}. (In that proof, it was assumed that $c$ is real, but this was only for convenience -- the same argument works for complex $c$ provided that $c-R\not\in\mathbb{R}^+_0$.)
\end{proof}

Despite the increasing usefulness and applications of fractional calculus, it has so far been largely neglected as a tool in analytic number theory. The idea of bringing fractional calculus and analytic number theory together was born in the work of Keiper, who in his 1975 MSc thesis \cite{Keiper} established a formula for the Riemann zeta function as a Riemann--Liouville fractional derivative. It has only been revived very recently, in the work of Guariglia et al \cite{GuarigliaI,GuarigliaII,GuarigliaIII} and also Srivastava et al \cite{SrivastavaI,SrivastavaII,SrivastavaIII} -- but the Guariglia papers use a different model of fractional calculus, namely a recent variant due to Ortigueira of the Caputo model, while the Srivastava papers only consider fractional expressions for generalisations of the Lerch zeta function in terms of each other, not in terms of elementary functions.

Here, we establish a new relationship between fractional calculus and zeta functions, by writing the Lerch zeta function as a fractional derivative of a much simpler function. We use only the classical Riemann--Liouville model of fractional calculus, without the complications introduced by newer models. Furthermore, we must necessarily use the Lerch zeta function rather than the Hurwitz or Riemann zeta functions, since the third parameter $t$ in $L(t,x,s)$ plays a vital role in our derivation. This may explain why our formula has not been discovered before. Of course, it does yield a new expression for the Riemann zeta function too, simply by setting $x=1$ and $t=0$.

This paper is organised as follows. In section 2, we derive the main result, justify its naturality by several remarks to verify various aspects of it, and use it to deduce further formulae linking zeta functions with fractional differintegrals. In section 3, we comment on possible applications and extensions of our results.

\section{The main results}

The crux of this work is the following theorem expressing the Lerch zeta function as a fractional differintegral.

\begin{theorem}
\label{result}
The Lerch zeta function can be written as
\begin{equation}
\label{result:eqn}
L(t,x,s)=(2\pi)^s\exp\left[i\pi(\tfrac{s}{2}-2tx)\right]\prescript{}{-\infty}D_{t}^{-s}\left(\frac{e^{2\pi itx}}{1-e^{2\pi it}}\right)
\end{equation}
for any complex numbers $s,x,t$ satisfying $\Imag(t)>0$ and $x\not\in(-\infty,0]$.
\end{theorem}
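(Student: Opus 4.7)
The plan is to realise the inner function as a geometric series in $e^{2\pi it}$, apply the fractional derivative termwise via Lemma~\ref{RL:exp}, and recognise the resulting sum as the Lerch zeta series. Under the hypothesis $\Imag(t)>0$ we have $|e^{2\pi it}|<1$, so
\[
\frac{e^{2\pi itx}}{1-e^{2\pi it}}=\sum_{n=0}^{\infty}e^{2\pi i(n+x)t}
\]
converges absolutely and locally uniformly on the open upper half $t$-plane.

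I would apply Lemma~\ref{RL:exp} to each summand with $k=2\pi i(n+x)$ and $\alpha=-s$, obtaining
\[
\prescript{}{-\infty}D_t^{-s}\!\left(e^{2\pi i(n+x)t}\right)=\bigl(2\pi i(n+x)\bigr)^{-s}e^{2\pi i(n+x)t}
\]
whenever $2\pi i(n+x)\notin\mathbb{R}_0^-$; restricting temporarily to real $x>0$ secures this for every $n\geq 0$. To swap the fractional differintegral with the summation I would then invoke Lemma~\ref{RL:series}, restricting $s$ to have $\Real(s)>0$ (so $\Real(-s)<0$) and choosing a disc $|t-c|\leq R$ inside the upper half-plane. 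Along the horizontal ray from $c-R$ to $-\infty$, $\Imag(u)$ is constant and positive, so a direct geometric estimate
\[
\Bigl|\sum_{n>N}e^{2\pi i(n+x)u}\Bigr|\leq\frac{e^{-2\pi(N+1+x)\Imag(u)}}{1-e^{-2\pi\Imag(u)}}
\]
delivers exponential decay in $N$ independently of $\Real(u)$, which suffices to verify the tail-decay hypothesis of Lemma~\ref{RL:series}.

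After the interchange, for real $x>0$ one has $\arg(2\pi i(n+x))=\pi/2\in(-\pi,\pi)$, so the principal branch factorises cleanly as $(2\pi i(n+x))^{-s}=(2\pi)^{-s}e^{-i\pi s/2}(n+x)^{-s}$. Pulling the common $e^{2\pi itx}$ out of the resulting sum identifies what remains as $L(t,x,s)$, giving
\[
\prescript{}{-\infty}D_t^{-s}\!\left(\frac{e^{2\pi itx}}{1-e^{2\pi it}}\right)=(2\pi)^{-s}e^{-i\pi s/2}e^{2\pi itx}L(t,x,s),
\]
which rearranges to the claimed identity. The restrictions $\Real(s)>0$ and $x\in\mathbb{R}_{>0}$ can then be lifted by analytic continuation: the right-hand side is meromorphic in $s$ via the Riemann--Liouville derivative definition~\eqref{RLderiv:defn}, and both sides are jointly holomorphic in $x$ on $\mathbb{C}\setminus(-\infty,0]$.

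The hardest part will be the bookkeeping of complex powers: maintaining $\arg(2\pi i(n+x))\in(-\pi,\pi)$ is exactly what forces the initial restriction to real $x>0$ and the subsequent analytic continuation, and the clean factorisation of $(2\pi)^{-s}e^{-i\pi s/2}$ out of each $(2\pi i(n+x))^{-s}$ is precisely what produces the phase $\exp[i\pi(s/2-2tx)]$ in the final formula. By contrast, once one observes that $\Imag(u)$ is constant and positive along the horizontal ray, verifying the tail-decay hypothesis of Lemma~\ref{RL:series} is essentially a one-line geometric-series estimate.
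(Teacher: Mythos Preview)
Your core computation---expand as a geometric series, apply Lemma~\ref{RL:exp} termwise, justify the interchange via Lemma~\ref{RL:series}, and identify the Lerch series---is exactly the paper's approach, and your treatment of the branch bookkeeping and the tail estimate is fine.

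The gap is in the analytic continuation in $x$. You assert that ``both sides are jointly holomorphic in $x$ on $\mathbb{C}\setminus(-\infty,0]$'', but on the right-hand side this is not true as stated: along the horizontal contour $u\in(-\infty,t]$ one has $\Imag(u)=\Imag(t)>0$ fixed, and $|e^{2\pi iux}|=e^{-2\pi(\Real(u)\Imag(x)+\Imag(u)\Real(x))}$, which blows up as $\Real(u)\to-\infty$ whenever $\Imag(x)>0$. The denominator $1-e^{2\pi iu}$ is bounded and $(t-u)^{s-1}$ is only polynomial, so the integral defining $\prescript{}{-\infty}D_t^{-s}$ genuinely diverges in the upper half $x$-plane. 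Hence you cannot simply invoke joint holomorphicity of the differintegral there. The paper handles this in stages: it first shows the integral converges and is holomorphic for $\Imag(x)<0$ (and separately for $x\in\mathbb{R}\setminus\mathbb{Z}_0^-$ under a restriction on $s$), takes a union of domains, and then passes to $\Imag(x)>0$ by exploiting the conjugation symmetry $\overline{L(t,x,s)}=L(-\bar t,\bar x,\bar s)$, which maps the lower half $x$-plane to the upper while preserving $\Imag(t)>0$. You need some device of this kind to reach $\Imag(x)>0$; a bare appeal to analytic continuation does not suffice, because the object you are continuing is not a priori defined there.
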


\begin{proof}
We start from the definition \eqref{Lerch:defn} of the Lerch zeta function, and use the result of Lemma \ref{RL:exp} to rewrite the summand as a fractional differintegral:
\begin{align}
\nonumber L(t,x,s)&=\sum_{n=0}^{\infty}(n+x)^{-s}e^{2\pi itn}=(2\pi i)^se^{-2\pi itx}\sum_{n=0}^{\infty}(2\pi i)^{-s}(n+x)^{-s}e^{2\pi it(n+x)} \\
\label{Lerch:first} &=(2\pi i)^se^{-2\pi itx}\sum_{n=0}^{\infty}\prescript{}{-\infty}D_{t}^{-s}\left(e^{2\pi it(n+x)}\right).
\end{align}
So far our argument is valid for all $t,x,s\in\mathbb{C}$ such that $\Real(s)>1$, $\Real(x)>0$, and $\Imag(t)\geq0$. These conditions come from the definition \eqref{Lerch:defn}; the extra condition that $2\pi i(n+x)\not\in\mathbb{R}^-_0$, required by Lemma \ref{RL:exp}, is automatically satisfied for all $n\geq0$ due to the condition we already have on $x$. Note that since $s$ has positive real part, the fractional operator appearing in \eqref{Lerch:first} is an integral and not a derivative.

The next consideration is whether or not the summation and fractional integration operators in \eqref{Lerch:first} can be swapped. 
For any $\epsilon>0$, the series \[\sum_{n=0}^{\infty}e^{2\pi it(n+x)}\] converges uniformly on the closed region $\Imag(t)\geq\epsilon$ of the upper half $t$-plane, and indeed \[\left(\sum_{n=N+1}^{\infty}e^{2\pi it(n+x)}\right)t^{\delta-s}\rightarrow0\text{ as }N\rightarrow\infty\] uniformly on this region for any fixed $\delta<1$. So, under the slightly strengthened condition $\Imag(t)>0$, it follows from Lemma \ref{RL:series} that the series of fractional integrals also converges locally uniformly and
\begin{equation*}
\prescript{}{-\infty}D_{t}^{-s}\left(\sum_{n=0}^{\infty}e^{2\pi it(n+x)}\right)=\sum_{n=0}^{\infty}\prescript{}{-\infty}D_{t}^{-s}\left(e^{2\pi it(n+x)}\right).
\end{equation*}
Substituting this identity into the expression \eqref{Lerch:first} yields:
\begin{align*}
L(t,x,s)&=(2\pi i)^se^{-2\pi itx}\prescript{}{-\infty}D_{t}^{-s}\left(\sum_{n=0}^{\infty}e^{2\pi it(n+x)}\right) \\
&=(2\pi i)^se^{-2\pi itx}\prescript{}{-\infty}D_{t}^{-s}\left(e^{2\pi itx}\sum_{n=0}^{\infty}\left(e^{2\pi it}\right)^n\right) \\
&=(2\pi i)^se^{-2\pi itx}\prescript{}{-\infty}D_{t}^{-s}\left(\frac{e^{2\pi itx}}{1-e^{2\pi it}}\right) \\
&=(2\pi)^s\exp\left[i\pi(\tfrac{s}{2}-2tx)\right]\prescript{}{-\infty}D_{t}^{-s}\left(\frac{e^{2\pi itx}}{1-e^{2\pi it}}\right),
\end{align*}
as required.

We have now proved the main result \eqref{result:eqn} under the following assumptions:
\begin{equation*}
\Real(s)>1,\quad\Real(x)>0,\quad\Imag(t)>0.
\end{equation*}
By analytic continuation, these assumptions can be relaxed to any $t,x,s\in\mathbb{C}$ such that both sides of \eqref{result:eqn} are still holomorphic. We know from \cite[Theorem 2.3]{LagariasII} that the left-hand side $L(t,x,s)$ can be extended to a holomorphic function on the domain
\[\{(t,x,s)\in\mathbb{C}\times\mathbb{C}\times\mathbb{C}:\Imag(t)>0,x\not\in(-\infty,0]\},\]
this domain being embeddable into the universal cover of $(\mathbb{C}\backslash\mathbb{Z})\times(\mathbb{C}\backslash\mathbb{Z}^-_0)\times\mathbb{C}$.

The right-hand side of \eqref{result:eqn} is clearly going to be holomorphic in $x$ wherever it is well-defined, and ditto in $s$ by \cite[\S2.4]{Samko}. It is well-defined and holomorphic in $t$ provided that the fractional differintegral is well-defined and holomorphic in $t$.

For $\Real(s)>0$, this differintegral can be written as
\begin{equation}
\label{result:FI}
\tfrac{1}{\Gamma(s)}\int_{-\infty}^t(t-u)^{s-1}\frac{e^{2\pi iux}}{1-e^{2\pi iu}}\,\mathrm{d}u.
\end{equation}
The integrand here is holomorphic in $u\in\mathbb{C}\backslash\mathbb{Z}$, since we are assuming the contour of integration to be horizontal in the complex plane. Thus the whole expression is well-defined and holomorphic for any $t,x,s$ such that $\Imag(t)>0$ and the integral converges at both endpoints.

Near $u=t$, the exponential-fraction part of the integrand is constant, so the integral behaves like $(t-u)^s$, which converges since we have assumed $\Real(s)>0$.

Near $u=-\infty$, the exponential denominator is bounded (since we have $\Imag(u)>0$), the numerator has exponential decay provided that $\Imag(x)<0$, and the $(t-u)^{s-1}$ term has only polynomial growth.

Thus the expression \eqref{result:FI} is well-defined and holomorphic in all three variables provided that $\Real(s)>0$, $\Imag(x)<0$, and $\Imag(t)>0$.

We can extend the region of validity to cover $x\in\mathbb{R}\setminus\mathbb{Z}^-_0$ too, given an extra restriction on $s$. The series \[\sum_{n=0}^{\infty}e^{2\pi iu(x+n)}=\frac{e^{2\pi iux}}{1-e^{2\pi iu}}\] is uniformly convergent, since $u$ has a fixed positive imaginary part. Therefore \eqref{result:FI} can be rewritten, regardless of $x$, in the form of the series \[\tfrac{1}{\Gamma(s)}\sum_{n=0}^{\infty}\int_{-\infty}^t(t-u)^{s-1}e^{2\pi iu(x+n)}\,\mathrm{d}u,\] whose integral summand is well-defined for $x\in\mathbb{R}\backslash\mathbb{Z}^-_0$ provided that $\Real(s)\leq1$.

Given the definition \eqref{RLderiv:defn} of fractional derivatives, the $\Real(s)>0$ requirement can be eliminated immediately.

So the main result \eqref{result:eqn} is now proved under the following assumption:
\begin{equation*}
\Imag(x)<0,\Imag(t)>0\quad\text{ or }\quad \Real(s)\leq1,x\in\mathbb{R}\setminus\mathbb{Z}^-_0,\Imag(t)>0.
\end{equation*}
But we already know that \eqref{result:eqn} is also valid for $\Real(s)>1,\Real(x)>0,\Imag(t)>0$. Thus, by taking unions of domains, we can say that it is always valid for
\begin{equation*}
\Imag(x)<0 \text{ or } x\in\mathbb{R}^+,\quad\quad\Imag(t)>0.
\end{equation*}

Finally, it is clear from the definition \eqref{Lerch:defn} that the Lerch zeta function satisfies the following basic functional equation:
\begin{equation}
\label{Lerch:conj}
\overline{L(t,x,s)}=L(-\bar{t},\bar{x},\bar{s}).
\end{equation}
The condition $\Imag(t)>0$ is preserved by mapping $t$ to $-\bar{t}$, but if $\Imag(x)\leq0$, then $\Imag(\bar{x})\geq0$. Thus, if \eqref{result:eqn} is known to be valid for the lower half plane part of $\mathbb{C}\backslash\mathbb{R}^-_0$, then by taking complex conjugates it follows that it is also valid for the upper half plane part of $\mathbb{C}\backslash\mathbb{R}^-_0$, and therefore for all $x\in\mathbb{C}\backslash\mathbb{R}^-_0$.
\end{proof}

\begin{remark}
Note that unlike previous results on the fractional calculus of zeta functions \cite{Keiper,GuarigliaI}, our formula depends crucially on using the Lerch zeta function rather than the Riemann or Hurwitz zeta functions. The third parameter $t$ -- i.e. the one which appears in the Lerch function $L(t,x,s)$ but not the Riemann or Hurwitz functions -- is a fundamental part of our result \eqref{result:eqn}: we could not have achieved analogous results for $\zeta(s)$ or $\zeta(x,s)$ without first introducing this extra parameter in order to differentiate with respect to it.

It is, however, possible to obtain a formula for the Riemann zeta function as a corollary of Theorem \ref{result}, as follows.
\end{remark}

\begin{corollary}
The Riemann zeta function can be written as
\begin{equation}
\label{result:Riemann1}
\zeta(s)=\frac{(2\pi i)^s}{2^{1-s}-1}\prescript{}{-\infty}D_{t}^{-s}\left(\frac{1}{e^{-2\pi it}-1}\right)\Bigg|_{t=\frac{1}{2}}
\end{equation}
for any $s\in\mathbb{C}$, or alternatively as
\begin{equation}
\label{result:Riemann2}
\zeta(s)=(2\pi i)^s\lim_{t\rightarrow0}\left(\prescript{}{-\infty}D_{t}^{-s}\left(\frac{1}{e^{-2\pi it}-1}\right)\right)
\end{equation}
for $\Real(s)>1$.
\end{corollary}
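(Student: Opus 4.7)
The plan is to derive both identities by specialising Theorem \ref{result} to $x = 1$, combined with analytic continuation in $t$ to reach the real line from the upper half-plane.

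For \eqref{result:Riemann1}, I would exploit the classical identity $\zeta(s) = (1 - 2^{1-s})^{-1} \eta(s)$, where $\eta(s) = \sum_{n=1}^{\infty}(-1)^{n-1}n^{-s}$ is the Dirichlet eta function. A direct computation from the defining series \eqref{Lerch:defn} shows that $L(1/2,1,s) = \sum_{n=0}^{\infty}(-1)^n(n+1)^{-s} = \eta(s)$, since $e^{i\pi n} = (-1)^n$. Applying Theorem \ref{result} at $x=1$ and taking the limit $t \to 1/2$ from $\Imag(t) > 0$ then yields an expression for $\eta(s)$ as a fractional differintegral; the simplifications $\exp[i\pi(s/2 - 1)] = -e^{i\pi s/2}$ and $(2\pi)^s e^{i\pi s/2} = (2\pi i)^s$ under the principal branch, the algebraic rewriting $\frac{e^{2\pi it}}{1-e^{2\pi it}} = \frac{1}{e^{-2\pi it}-1}$, and division by $1 - 2^{1-s}$, together produce \eqref{result:Riemann1}.

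For \eqref{result:Riemann2}, I would similarly apply Theorem \ref{result} with $x=1$ and let $t \to 0$ from $\Imag(t) > 0$. Under $\Real(s) > 1$ the Lerch series converges absolutely, so $L(t,1,s) \to L(0,1,s) = \zeta(s)$ in this limit; on the right-hand side the prefactor becomes $(2\pi)^s e^{i\pi s/2} = (2\pi i)^s$, immediately yielding \eqref{result:Riemann2}.

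The main obstacle in both cases is justifying the passage to the real axis in $t$, since Theorem \ref{result} only guarantees holomorphy on $\Imag(t) > 0$. The key observation is that the horizontal contour in the fractional integral \eqref{RLint:defn} from $-\infty$ to $t = t_0 + i\epsilon$ lies strictly above the real-integer poles of the integrand $(t-u)^{s-1}/(e^{-2\pi iu}-1)$, so the differintegral is holomorphic in $t$ throughout the open strip $0 < \Imag(t) < 1$. For \eqref{result:Riemann1}, the value at $t_0 = 1/2$ exists as a boundary limit because $1/2$ is a regular point of the bracketed function, and both sides of \eqref{result:eqn} are continuous at such a boundary point. For \eqref{result:Riemann2}, the restriction $\Real(s) > 1$ is essential: the bracketed function has a simple pole at $t = 0$, and only in this half-plane does the fractional-integration smoothing compensate for this singularity enough for the boundary limit to exist and agree with $\zeta(s)$ (reflecting the well-known pole of $\zeta$ at $s = 1$).
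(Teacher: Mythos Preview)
Your approach is essentially the same as the paper's: specialise Theorem~\ref{result} to $x=1$, use the identity $L(\tfrac{1}{2},1,s)=(1-2^{1-s})\zeta(s)$ for \eqref{result:Riemann1}, and let $t\to 0$ under $\Real(s)>1$ for \eqref{result:Riemann2}. Your treatment of the boundary limit at $\Imag(t)=0$ is in fact more careful than the paper's terse corollary proof; the paper only introduces the limiting interpretation \eqref{reallimit} later, in the proof of Theorem~\ref{result2}.
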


\begin{proof}
The first identity \eqref{result:Riemann1} follows by letting $t=\frac{1}{2}$ in \eqref{result:eqn} and noting the fact that \[L(\frac{1}{2},1,s)=(1-2^{1-s})\zeta(s).\]

The second identity \eqref{result:Riemann2} follows by letting $t\rightarrow0$ in \eqref{result:eqn} and recalling the series definitions \eqref{Riemann:defn},\eqref{Lerch:defn}. We note that \eqref{result:Riemann2} does not hold in general, because the limit as $t\rightarrow0$ of the Lerch function does not always exist \cite{Oberhettinger}.
\end{proof}

\begin{remark}
We verify that our new formula satisfies the complex conjugation relation \eqref{Lerch:conj} for the Lerch zeta function. Using the right-hand side of \eqref{result:eqn} as the definition of $L(t,x,s)$, we get:
\begin{align*}
\overline{L(t,x,s)}&=(2\pi)^{\bar{s}}\exp\left[-i\pi(\tfrac{\bar{s}}{2}-2\bar{t}\bar{x})\right]\overline{\prescript{}{-\infty}D_{u=t}^{-s}\left(\frac{e^{2\pi iux}}{1-e^{2\pi iu}}\right)} \\
L(-\bar{t},\bar{x},\bar{s})&=(2\pi)^{\bar{s}}\exp\left[i\pi(\tfrac{\bar{s}}{2}+2\bar{t}\bar{x})\right]\prescript{}{-\infty}D_{u=-\bar{t}}^{-\bar{s}}\left(\frac{e^{2\pi iu\bar{x}}}{1-e^{2\pi iu}}\right)
\end{align*}
(We use the notation $D^{\alpha}_{u=t}f(u)$ instead of $D^{\alpha}_tf(t)$ in order to avoid confusion in the case where $t$ is replaced by $-\bar{t}$.) Thus, to verify \eqref{Lerch:conj} it will be sufficient to show that
\begin{equation*}
\overline{\prescript{}{-\infty}D_{u=t}^{-s}\left(\frac{e^{2\pi iux}}{1-e^{2\pi iu}}\right)}=e^{i\pi s}\prescript{}{-\infty}D_{u=-\bar{t}}^{-\bar{s}}\left(\frac{e^{2\pi iu\bar{x}}}{1-e^{2\pi iu}}\right),
\end{equation*}
or in other words, assuming $\Real(s)>0$, \[\tfrac{1}{\Gamma(\bar{s})}\overline{\int_{-\infty}^t(t-u)^{s-1}\frac{e^{2\pi iux}}{1-e^{2\pi iu}}\,\mathrm{d}u}=e^{i\pi s}\tfrac{1}{\Gamma(\bar{s})}\int_{-\infty}^{-\bar{t}}(-\bar{t}-u)^{\bar{s}-1}\frac{e^{2\pi iu\bar{x}}}{1-e^{2\pi iu}}\,\mathrm{d}u.\] Writing $t=a+bi$ and $-\bar{t}=-a+bi$ and $u=r+bi$, this becomes \[\int_{-\infty}^a(a-r)^{\bar{s}-1}\frac{e^{-2\pi i\bar{u}\bar{x}}}{1-e^{-2\pi i\bar{u}}}\,\mathrm{d}r=e^{i\pi s}\int_{-\infty}^{-a}(-a-r)^{\bar{s}-1}\frac{e^{2\pi iu\bar{x}}}{1-e^{2\pi iu}}\,\mathrm{d}r.\] Since $b>0$, both denominators can be expanded as series, so it is sufficient to prove that \[\int_{-\infty}^a(a-r)^{\bar{s}-1}e^{-2\pi i\bar{u}(\bar{x}+n)}\,\mathrm{d}r=e^{i\pi s}\int_{-\infty}^{-a}(-a-r)^{\bar{s}-1}e^{2\pi iu(\bar{x}+n)}\,\mathrm{d}r\] for all $n\in\mathbb{Z}^+_0$. Making a linear substitution and factoring out constant terms, this reduces to \[\int_0^{\infty}p^{\bar{s}-1}e^{2\pi ip(\bar{x}+n)}\,\mathrm{d}p=e^{i\pi s}\int_0^{\infty}p^{\bar{s}-1}e^{-2\pi ip(\bar{x}+n)}\,\mathrm{d}p,\] or equivalently \[\int_0^{\infty}p^{\bar{s}-1}e^{2\pi ip(\bar{x}+n)}\,\mathrm{d}p=-\int_{-\infty}^0p^{\bar{s}-1}e^{2\pi ip(\bar{x}+n)}\,\mathrm{d}p,\] where the integral along the negative real axis is assumed to be with argument $+\pi$. And by Jordan's lemma, closing the real contour in the upper half plane gives \[\int_{-\infty}^{\infty}p^{\bar{s}-1}e^{2\pi ip(\bar{x}+n)}\,\mathrm{d}p=0\] for all $n\geq0$, provided that $x\in\mathbb{R}^+$ and $\Real(s)<1$.

So we have re-verified the identity \eqref{Lerch:conj} under the assumptions $0<\Real(s)<1,x\in\mathbb{R}^+,\Imag(t)>0$. This acts as a confirmation of the correctness of our result.
\end{remark}

The result of Theorem \ref{result} is an expression for the Lerch zeta function as the product of a fractional differintegral and a simple explicit term. We now demonstrate how this explicit term arises naturally from consideration of the Lerch zeta function and its properties, and thence derive a second formula for the Lerch zeta function in terms of fractional differintegrals.

\begin{remark}
It is known \cite{Apostol,LagariasI,Lerch} that for $s,t\in\mathbb{C}$ with $\Imag(t)>0$ and $x\in(0,1)$, or with $t,x\in(0,1)$, the Lerch zeta function satisfies the following functional equation:
\begin{multline}
\label{Lerch:fnleqn}
L(t,x,1-s)=\frac{\Gamma(s)}{(2\pi)^s}\Big(\exp\left[i\pi(\tfrac{s}{2}-2tx)\right]L(-x,t,s) \\ +\exp\left[-i\pi(\tfrac{s}{2}-2x(1-t)\right]L(x,1-t,s)\Big)
\end{multline}
Thus, we observe that the exponential multiplier term $\exp\left[i\pi(\tfrac{s}{2}-2tx)\right]$ seen in \eqref{result:eqn} is already known to arise from essential properties of the Lerch zeta function. This demonstrates the naturality of the result of Theorem \ref{result}.
\end{remark}

\begin{theorem}
\label{result2}
The Lerch zeta function can be written as
\begin{equation}
\label{result2:eqn}
L(t,x,1-s)=\Gamma(s)e^{i\pi s}\prescript{}{-\infty}D_{u}^{-s}\left(\frac{e^{2\pi itu}}{1-e^{-2\pi iu}}\right)\bigg|_{u=-x}-\Gamma(s)\prescript{}{-\infty}D_{x}^{-s}\left(\frac{e^{-2\pi itx}}{1-e^{2\pi ix}}\right)
\end{equation}
where $s,x,t$ are any complex numbers satisfying $\Imag(t)>0$ and $x\in(0,1)$.
\end{theorem}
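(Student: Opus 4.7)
The plan is to combine the functional equation \eqref{Lerch:fnleqn} from the preceding remark with Theorem \ref{result}. The idea is to substitute Theorem \ref{result} into each of the two Lerch factors on the right-hand side of \eqref{Lerch:fnleqn} and then massage the resulting expression into the form \eqref{result2:eqn}.

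Applying Theorem \ref{result} to $L(-x,t,s)$ and to $L(x,1-t,s)$, each produces a prefactor together with a Riemann--Liouville fractional differintegral. The exponential prefactors combine very cleanly with those already present in \eqref{Lerch:fnleqn}: for the $L(-x,t,s)$ term the two factors $\exp[i\pi(\tfrac{s}{2}-2tx)]$ and $\exp[i\pi(\tfrac{s}{2}+2tx)]$ multiply to $e^{i\pi s}$ (which is exactly the $e^{i\pi s}$ appearing in \eqref{result2:eqn}), while for the $L(x,1-t,s)$ term the two factors $\exp[\mp i\pi(\tfrac{s}{2}-2x(1-t))]$ are reciprocals and so cancel. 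The $(2\pi)^{\pm s}$ factors likewise cancel, leaving $\Gamma(s)$ as the overall coefficient. After this collapse, I would use the elementary identity $\frac{1}{1-e^{2\pi iu}}=-\frac{e^{-2\pi iu}}{1-e^{-2\pi iu}}$ (together with its variants, each obtained by multiplying numerator and denominator of a geometric fraction by $\mp e^{\mp 2\pi iu}$) to rewrite the two integrands in the specific forms $\frac{e^{2\pi itu}}{1-e^{-2\pi iu}}$ and $\frac{e^{-2\pi itx}}{1-e^{2\pi ix}}$ demanded by \eqref{result2:eqn}, picking up the minus sign on the second term in the process.

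The main technical obstacle will be justifying the use of Theorem \ref{result} here, because its hypothesis requires the first argument of $L$ to have strictly positive imaginary part, whereas in the functional equation the first arguments $-x$ and $x$ are real. I would handle this by first proving the identity with those first arguments displaced to $-x+i\eta$ and $x+i\eta$ for small $\eta>0$, where Theorem \ref{result} applies directly and the horizontal integration contours stay clear of the integer poles of $(1-e^{\pm 2\pi iu})^{-1}$, and then passing to the limit $\eta\to 0^+$. Both sides of \eqref{result2:eqn} remain holomorphic on an open neighbourhood of the relevant real intervals (the left-hand side by \cite[Theorem 2.3]{LagariasII}, the right-hand side by the analyticity analysis already carried out in the proof of Theorem \ref{result}), so this limit identifies them for $x\in(0,1)$ as required.
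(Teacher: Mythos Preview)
Your proposal is correct and follows essentially the same route as the paper: substitute Theorem~\ref{result} into both Lerch terms on the right of the functional equation~\eqref{Lerch:fnleqn}, let the exponential and $(2\pi)^{\pm s}$ prefactors collapse exactly as you describe, and then use the identity $\frac{1}{1-e^{2\pi iu}}=-\frac{e^{-2\pi iu}}{1-e^{-2\pi iu}}$ to put the integrands in the stated form. The paper handles the real-first-argument issue in the same spirit, by defining the fractional integral at $t\in\mathbb{R}\setminus\mathbb{Z}$ as the limit $\eta\to0^+$ of the integral along the horizontal ray through $t+i\eta$ (its equation~\eqref{reallimit}); your ``displace each first argument to $\pm x+i\eta$ and let $\eta\to0^+$'' is exactly this device applied termwise, not a perturbation of the functional equation itself.
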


\begin{proof}
In order to use the identity \eqref{Lerch:fnleqn} together with the new expression \eqref{result:eqn}, we will need to show that \eqref{result:eqn} can be extended from $\Imag(t)>0$ to the line $t\in\mathbb{R}\backslash\mathbb{Z}$. This can be shown by continuity, provided that we choose the right contour for the integration inherent in the fractional differintegral. When $t\in\mathbb{R}\backslash\mathbb{Z}$, the straight ray from $t$ to $-\infty$ contains infinitely many poles of the function $\frac{e^{2\pi itx}}{1-e^{2\pi it}}$, so the integral must be defined as a limit:
\begin{equation}
\label{reallimit}
\prescript{}{-\infty}D_{t}^{-s}\left(\frac{e^{2\pi itx}}{1-e^{2\pi it}}\right)=\lim_{\epsilon\rightarrow0^+}\left[\tfrac{1}{\Gamma(s)}\int_{-\infty}^{t'}(t'-u)^{s-1}\frac{e^{2\pi iux}}{1-e^{2\pi iu}}\,\mathrm{d}u\right]_{t'=t+i\epsilon},\quad t\in\mathbb{R}.
\end{equation}
With this definition, it is clear by continuity that \eqref{result:eqn} still holds for all $t$ with $\Imag(t)\geq0,t\not\in\mathbb{Z}$.

Now we can start from the functional equation \eqref{Lerch:fnleqn} and substitute \eqref{result:eqn} for the two Lerch functions on the right-hand side. For simplicity, we shall drop the left-subscript $-\infty$ on the fractional operators, since they all use the same constant of differintegration. We also use the notation $D^{\alpha}_{u=x}f(u)$ instead of $D^{\alpha}_xf(x)$, just to avoid confusion in the case where $x$ is replaced by $-x$.
\begin{align*}
\begin{split}
L(t,x,1-s)&=\frac{\Gamma(s)}{(2\pi)^s}\Big(\exp\left[i\pi(\tfrac{s}{2}-2tx)\right]L(-x,t,s) \\
&\hspace{4cm}+\exp\left[-i\pi(\tfrac{s}{2}-2x(1-t)\right]L(x,1-t,s)\Big)
\end{split} \\
\begin{split}
&\hspace{-1cm}=\frac{\Gamma(s)}{(2\pi)^s}\bigg(\exp\left[i\pi(\tfrac{s}{2}-2tx)\right](2\pi)^s\exp\left[i\pi(\tfrac{s}{2}+2tx)\right]D_{u=-x}^{-s}\left(\frac{e^{2\pi itu}}{1-e^{2\pi iu}}\right) \\
&\hspace{-0.5cm}+\exp\left[-i\pi(\tfrac{s}{2}-2x(1-t)\right](2\pi)^s\exp\left[i\pi(\tfrac{s}{2}-2(1-t)x)\right]D_{u=x}^{-s}\left(\frac{e^{2\pi i(1-t)u}}{1-e^{2\pi iu}}\right)\bigg)
\end{split} \\
&\hspace{-1cm}=\Gamma(s)\bigg(\exp\left[i\pi s\right]D_{u=-x}^{-s}\left(\frac{e^{2\pi itu}}{1-e^{2\pi iu}}\right)+D_{u=x}^{-s}\left(\frac{e^{2\pi i(1-t)u}}{1-e^{2\pi iu}}\right)\bigg) \\
&\hspace{-1cm}=\Gamma(s)\bigg(e^{i\pi s}D_{u=-x}^{-s}\left(\frac{e^{2\pi itu}}{1-e^{2\pi iu}}\right)-D_{u=x}^{-s}\left(\frac{e^{-2\pi itu}}{1-e^{-2\pi iu}}\right)\bigg).
\end{align*}
And the required result follows.
\end{proof}

\begin{remark}
The results of Theorems \ref{result} and \ref{result2} can be used to provide a new elementary proof of the functional equation \eqref{Lerch:fnleqn}.

In the proof of Theorem \ref{result2}, we used the new expression \eqref{result:eqn} for the Lerch zeta function to reduce the right-hand side of the functional equation \eqref{Lerch:fnleqn} to an expression in terms of two fractional differintegrals. If we can rewrite this expression using elementary methods as simply $L(t,x,1-s)$, then we have rederived the functional equation using fractional calculus.

Therefore, we start from the right-hand side of \eqref{result2:eqn} and proceed as follows:
\begin{align*}
(\text{RHS of \eqref{result2:eqn}})&=e^{i\pi s}\int_{-\infty}^{-x}(-x-u)^{s-1}\frac{e^{2\pi itu}}{1-e^{-2\pi iu}}\,\mathrm{d}u-\int_{-\infty}^x(x-u)^{s-1}\frac{e^{-2\pi itu}}{1-e^{2\pi iu}}\,\mathrm{d}u \\
&=e^{i\pi s}\int^{\infty}_{x}(-x+u)^{s-1}\frac{e^{-2\pi itu}}{1-e^{2\pi iu}}\,\mathrm{d}u-\int_{-\infty}^x(x-u)^{s-1}\frac{e^{-2\pi itu}}{1-e^{2\pi iu}}\,\mathrm{d}u \\
&=-\int_{-\infty}^{\infty}(x-u)^{s-1}\frac{e^{-2\pi itu}}{1-e^{2\pi iu}}\,\mathrm{d}u,
\end{align*}
where the contour of integration from $-\infty$ to $+\infty$ crosses the real axis at $x$, passing above all the poles to the left of $x$ and below all the poles to the right of $x$. This choice of contour follows from the definition given by \eqref{reallimit}.

By Jordan's lemma, for $t\in\mathbb{R}$ and $\Real(s)<1$, the contour can be closed in the lower half plane. Then the residue theorem yields
\begin{align*}
(\text{RHS of \eqref{result2:eqn}})&=2\pi i\sum_{n=0}^{\infty}\Res_{u=-n}\left((x-u)^{s-1}\frac{e^{-2\pi itu}}{1-e^{2\pi iu}}\right) \\
&=2\pi i\sum_{n=0}^{\infty}(x+n)^{s-1}\frac{e^{2\pi itn}}{2\pi i} \\
&=\sum_{n=0}^{\infty}(x+n)^{s-1}e^{2\pi itn}=L(t,x,1-s),
\end{align*}
as required. Thus we have proved the functional equation \eqref{Lerch:fnleqn} in the case where $0<x<1,0<t<1,\Real(s)<1$.
\end{remark}

\section{Conclusions}

In this paper, we have forged a new connection between fractional calculus and the theory of zeta functions. This connection is different from others that have previously been discovered: it was found by using the Lerch zeta function, a significant generalisation of the more commonly seen Riemann and Hurwitz zeta functions, and it enables all of these zeta functions to be expressed as fractional derivatives of very basic functions.

We have also demonstrated the usefulness of our result by indicating its natural interplay with fundamental properties of zeta functions, and how it can even be used to provide new proofs of some of these properties.

Any new formula for zeta functions is potentially useful, as it gives a new angle of attack in the ceaseless attempts to establish important properties of such functions. It is especially important to establish more links between fractional calculus and analytic number theory, in order to increase the probability that all the machinery of one field can be brought to bear on the problems of the other.

The formulae proved in this paper could be just the start of a whole new project bringing together two distinct fields of study. For example, basic theorems of fractional calculus, such as analogues of the product rule and chain rule \cite{OslerI,OslerII,OslerIII}, may now be usable to generate significant new expressions for zeta functions. Creating new links between different areas is always an opportunity, and this is surely no exception.

\subsection*{Acknowledgments}

The author is grateful to Professors Athanassios S. Fokas and Dumitru Baleanu for inspiring discussions and recommendations to the literature, and also to the anonymous reviewer for their very helpful remarks and suggestions.


\begin{thebibliography}{99}







\bibitem{Apostol} 
T. M. Apostol, \emph{On the Lerch zeta function}, Pacific J. Math. {1(2)} (1951), 161--167.

\bibitem{Atangana} 
A. Atangana and D. Baleanu, \emph{New fractional derivatives with nonlocal and non-singular kernel: theory and application to heat transfer model}, Therm. Sci. {20(2)} (2016), 763--769.

\bibitem{Baleanu} 
D. Baleanu, J. A. T. Machado, and A. C. J. Luo, \emph{Fractional Dynamics and Control}, Springer, Berlin, 2011.

\bibitem{Caputo} 
M. Caputo and M. Fabrizio, \emph{A new Definition of Fractional Derivative without Singular Kernel}, Progr. Fract. Differ. Appl. {1(2)} (2015), 73--85.

\bibitem{GuarigliaII} 
C. Cattani and E. Guariglia, \emph{Fractional derivative of the Hurwitz $\zeta$-function and chaotic decay to zero}, J. King Saud Univ. - Sci. {28(1)} (2016), 75--81.

\bibitem{GuarigliaIII} 
C. Cattani, E. Guariglia, and S. Wang, \emph{On the Critical Strip of the Riemann zeta Fractional Derivative}, Fundamenta Informaticae {151(1-4)} (2017), 459--472.

\bibitem{Edwards} 
H. M. Edwards, \emph{Riemann's Zeta Function}, 2nd ed., Dover, New York, 2001.

\bibitem{GarunkstisI} 
R. Garunk\v{s}tis and J. Steuding, \emph{Do Lerch zeta-functions satisfy the Lindel\"of hypothesis?},
in: Analytic and Probab. Methods in Number Theory, Proc. III Intern. Conf. in Honour of J. Kubilius, Palanga, Lithuania, September 24--28, 2001,
A. Dubickas et al. (eds.),
TEV, Vilnius, 2002, 61--74.

\bibitem{GarunkstisII} 
R. Garunk\v{s}tis, \emph{Growth of the Lerch zeta-function}, Lithu. Math. J. {45(1)} (2005), 34--43.

\bibitem{GuarigliaI} 
E. Guariglia, \emph{Fractional Derivative of the Riemann Zeta Function},
in: Fractional Dynamics,
C. Cattani et al. (eds.), 
De Gruyter, Berlin, 2015, 357--368.

\bibitem{Hilfer} 
R. Hilfer, ed., \emph{Applications of Fractional Calculus in Physics}, World Scientific, Singapore, 2000.

\bibitem{Ivic} 
A. Ivi\'c, \emph{The Riemann Zeta-Function: Theory and Applications}, 2nd ed., Dover: New York, 2003.

\bibitem{Jarad} 
F. Jarad, E. U\v{g}urlu, T. Abdeljawad, and D. Baleanu, \emph{On a new class of fractional operators}, Adv. Diff. Equ. {2017:247} (2017).

\bibitem{Katugampola} 
U. N. Katugampola, \emph{A New Approach to Generalized Fractional Derivatives}, Bull. Math. Anal. Appl. {6(4)} (2014), 1--15.

\bibitem{Keiper} 
J. B. Keiper, \emph{Fractional calculus and its relationship to Riemann's zeta function}, MSc thesis, Ohio State University, 1975 (37 pages).

\bibitem{LagariasI} 
J. C. Lagarias and W.-C. W. Li, \emph{The Lerch zeta function I. Zeta integrals}, Forum Math. {24(1)} (2012), 1--48. 

\bibitem{LagariasII} 
J. C. Lagarias and W.-C. W. Li, \emph{The Lerch zeta function II. Analytic continuation}, Forum Math. {24(1)} (2012), 49--84. 

\bibitem{Lerch} 
M. Lerch, \emph{Note sur la fonction $\mathfrak{K}(w,x,s)=\sum_{k=0}^{\infty}\frac{e^{2k\pi ix}}{(w+k)^s}$}, Acta Math. {11} (1887), 19--24.

\bibitem{SrivastavaI}
S.-D. Lin and H. M. Srivastava, \emph{Some families of the Hurwitz--Lerch zeta functions and associated fractional derivative and other integral representations}, Appl. Math. Comput. {154} (2004), 725--733.

\bibitem{SrivastavaII}
S.-D. Lin, H. M. Srivastava, and P.-Y. Wang, \emph{Some expansion formulas for a class of generalized Hurwitz--Lerch zeta functions}, Integr. Transf. Spec. F. {17(11)} (2006), 817--827.

\bibitem{Magin} 
R. L. Magin, \emph{Fractional Calculus in Bioengineering}, Begell House Publishers, Connecticut, 2006.

\bibitem{Meerschaert} 
M. M. Meerschaert and A. Sikorski, \emph{Stochastic Models for Fractional Calculus}, de Gruyter, Berlin, 2012.

\bibitem{Miller} 
K. S. Miller and B. Ross, \emph{An Introduction to the Fractional Calculus and Fractional Differential Equations}, Wiley, New York, 1993.

\bibitem{Oberhettinger} 
F. Oberhettinger, \emph{Note on the Lerch zeta function}, Pacific J. Math. {6(1)} (1956), 117--120.

\bibitem{OslerI} 
T. J. Osler, \emph{Leibniz rule for fractional derivatives generalised and an application to infinite series}, SIAM J. Appl. Math. {18} (1970), 658--674.

\bibitem{OslerII} 
T. J. Osler, \emph{The fractional derivative of a composite function}, SIAM J. Math. Anal. {1} (1970), 288--293.

\bibitem{OslerIII} 
T. J. Osler, \emph{Fractional derivatives and Leibniz rule}, Amer. Math. Monthly {78} (1971), 645--649.

\bibitem{Petras} 
I. Petras, \emph{Fractional-Order Nonlinear Systems: Modeling, Analysis and Simulation}, Springer-Verlag, Berlin, 2011.

\bibitem{Rane} 
V. V. Rane, \emph{A new approximate functional equation for Hurwitz zeta function for rational parameter}, Proc. Indian Acad. Sci. - Math. Sci. {107(4)} (1997), 377--385.

\bibitem{Samko} 
S. G. Samko, A. A. Kilbas, and O. I. Marichev, \emph{Fractional integrals and derivatives: theory and applications}, Taylor \& Francis, London, 2002; orig. in Russian: Nauka i Tekhnika, Minsk, 1987.

\bibitem{SrivastavaIII}
H. M. Srivastava, R. K. Saxena, T. K. Pog\'any, and R. Saxena, \emph{Integral and computational representations of the extended Hurwitz--Lerch zeta function}, Integr. Transf. Spec. F. {22(7)} (2011), 487--506.

\bibitem{Titchmarsh} 
E. C. Titchmarsh, \emph{The Theory of the Riemann Zeta Function}, 2nd ed., OUP, New York, 1986.

\end{thebibliography}
\end{document}